\newtheorem{theorem}{Theorem}[section]
\newtheorem{lemma}[theorem]{Lemma}
\newtheorem{corollary}{Corollary}
\newdefinition{definition}{Definition}[section]
\newdefinition{conjecture}{Conjecture}[section]
\newdefinition{example}{Example}[section]
\newdefinition{remark}{Remark}
\newdefinition{note}{Note}
\newdefinition{case}{Case}
\newproof{proof}{Proof}
\DeclareMathOperator{\im}{Im}
\DeclareMathOperator*{\spanof}{span}
\newcommand{\setof}[1]{\left\{{#1}\right\}}
\newcommand{\cset}[2]{\setof{#1\,:\,#2}}
\newcommand{\ivcc}[2]{\left[#1,#2\right]}
 \newcommand{\ivu}{\ivcc{0}{1}}
\newcommand{\abs}[1]{\left|#1\right|}
\newcommand{\norm}[1]{\left\Vert#1\right\Vert}
\newcommand{\normop}[1]{\left\Vert#1\right\Vert_{op}}
\newcommand{\spacecf}{C(\ivcc{0}{1})}
\newcommand{\ballo}[2]{B({#1},{#2})}
\newcommand{\ballc}[2]{\overline{\ballo{#1}{#2}}}
\newcommand{\uballo}{\ballo{0}{1}}
\newcommand{\uballc}{\ballc{0}{1}}
\newcommand{\Rr}{{\mathbb R}}
\newcommand{\Cc}{{\mathbb C}}
\newcommand{\boundop}[2]{\mathcal{B}(#1,#2)}
\newcommand{\ie}{i.\,e.}
\newcommand{\eg}{e.\,g.}
\journal{Journal of Approximation Theory}
\def\ps@pprintTitle{%
 \let\@oddhead\@empty
 \let\@evenhead\@empty
 \def\@oddfoot{{\footnotesize\itshape \hfill Preprint, March 18, 2014}}%
 \let\@evenfoot\@oddfoot}
\begin{document}

\begin{frontmatter}
  \title{On the spectrum of positive finite-rank operators\\ with a partition of unity property}

  \author{Johannes~Nagler}
  \ead{johannes.nagler@uni-passau.de}

  \address{Fakult\"at f\"ur Informatik und Mathematik, Universit\"at Passau, Germany}

  \begin{abstract}
    We characterize the spectrum of positive linear operators $T:X \to Y$,
    where $X$ and $Y$ are complex Banach function spaces with unit $1$, 
    having finite rank and a partition of unity property.
    Then all the points in the spectrum are eigenvalues of $T$ and $\sigma_p(T) \subset \uballo \cup \setof{1}$.
    The main result is that $1$ is the only eigenvalue on the unit circle.
  \end{abstract}

  \begin{keyword}
    positive linear operator \sep finite-rank \sep spectrum \sep eigenvalues
  \end{keyword}
\end{frontmatter}

\newcommand{\polkn}{e_{k}}
\newcommand{\funkn}{\alpha^*_{k}}
\newcommand{\poljn}[1]{e_{#1}}
\newcommand{\funjn}[1]{\alpha^*_{#1}}

We study positive linear operators that have finite-rank on some general 
infinite-dimensional complex Banach function spaces $X, Y$ with unit $1$. 
In addition, we assume that the positive finite-rank operator $T:X \to Y$ is unitary 
in the sense that $T1 = 1$.
This work generalized some results of the manuscript of Nagler, Cerejeiras, and Forster \cite{nagler2014}, 
where the spectral properties has been shown concretely for the Schoenberg operator 
in order to prove the limit of iterates and lower bounds. Since the result is established in a 
more general setting it is of interest of its own, as it is applicable to  positive
operators \eg,  on the space of continuous functions as well as on the space of integrable functions.
The results shown might serve as a starting point and can be used as in \cite{nagler2014} 
to prove the convergence of iterates by using spectral properties or to prove lower bounds
for the approximation error of other finite-rank operators.

\section{The spectrum of positive finite-rank operators}
We will discuss the positive finite-rank operator $T: X \to Y$ for $f \in X$ defined as
\begin{equation}
  \label{eq:discrete_operator}
  Tf = \sum_{k=1}^n \funkn(f)\cdot \polkn,\qquad f \in X,
\end{equation}
where $0 \leq \polkn \in X$ are positive functions of $X$ and $\funkn$ are  positive linear functionals with $\norm{\funkn} = \funkn(1) = 1$ for $k \in \setof{1,\ldots, n}$. 
In addition, let the basis functions $\polkn$ form a partition of unity, i.e., 
\begin{equation}
  \label{eq:partition_unity}
  \sum_{k=1}^n \polkn = 1.
\end{equation}
The following theorem is our main result.
\begin{theorem}[The spectrum of $T$]
  \label{thm:spectrum_of_finite-rank}
  The spectrum of the operator $T$, defined by \eqref{eq:discrete_operator}, consists only of the 
  point spectrum and is characterized by
  \begin{equation*}
    \sigma(T) = \sigma_p(T) \subset \uballo \cup \setof{1}.
  \end{equation*}
\end{theorem}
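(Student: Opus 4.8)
The plan is to transfer the whole problem to the $n\times n$ matrix $A=(a_{ij})_{i,j=1}^{n}$ with entries $a_{ij}:=\funjn{i}(\poljn{j})$ and then study its spectrum. First I would note that $T$ maps $X$ into the subspace $V:=\spanof\setof{\poljn{1},\ldots,\poljn{n}}$, whose dimension is at most $n$; thus $T$ is bounded with finite-dimensional range, hence compact. Since $\dim X=\infty$ exceeds $\dim V$, the kernel of $T$ is nontrivial, so $0\in\sigma_p(T)$, and by the Riesz--Schauder theory every nonzero point of $\sigma(T)$ is an eigenvalue of $T$ (of which there are only finitely many, since $T$ has finite rank). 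This already gives $\sigma(T)=\sigma_p(T)$, the first assertion.

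Next I would set up the matrix. As the $\funjn{i}$ and the $\poljn{j}$ are positive, $a_{ij}\ge 0$; and the partition-of-unity property \eqref{eq:partition_unity} gives, for every $i$, $\sum_{j=1}^{n}a_{ij}=\funjn{i}\bigl(\sum_{j=1}^{n}\poljn{j}\bigr)=\funjn{i}(1)=1$, so $A$ is row-stochastic. Now let $Tf=\lambda f$ with $\lambda\ne 0$ and $f\ne 0$. Then $f=\lambda^{-1}Tf\in V$, and applying $\funjn{i}$ to the identity $\lambda f=\sum_{j}\funjn{j}(f)\,\poljn{j}$ yields $\lambda\,\funjn{i}(f)=\sum_{j}a_{ij}\,\funjn{j}(f)$ for every $i$; i.e. the vector $v:=(\funjn{1}(f),\ldots,\funjn{n}(f))\in\Cc^{n}$ satisfies $Av=\lambda v$. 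Moreover $v\ne 0$, for $v=0$ would give $Tf=0$, hence $\lambda f=0$, hence $f=0$. So every nonzero eigenvalue of $T$ is an eigenvalue of the row-stochastic matrix $A$, and it remains to show $\sigma(A)\subseteq\uballo\cup\setof{1}$.

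For this, let $Av=\lambda v$ with $v\ne 0$, and choose $i_0$ with $\abs{v_{i_0}}=\normi{v}>0$. Then
\[
  \abs{\lambda}\,\abs{v_{i_0}}
  =\abs{\sum_{j}a_{i_0j}v_j}
  \le\sum_{j}a_{i_0j}\abs{v_j}
  \le\abs{v_{i_0}}\sum_{j}a_{i_0j}
  =\abs{v_{i_0}},
\]
so $\abs{\lambda}\le 1$, which together with the first part shows $\sigma(T)=\sigma_p(T)\subseteq\uballc$. Assume now $\abs{\lambda}=1$. Then both inequalities above are equalities; the second forces $\abs{v_j}=\normi{v}$ whenever $a_{i_0j}>0$, and the first forces all the terms $a_{i_0j}v_j$ with $a_{i_0j}>0$ to be nonnegative multiples of the unimodular number $\lambda v_{i_0}$. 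Combining the two and using $\abs{v_{i_0}}=\normi{v}$ gives $v_j=\lambda v_{i_0}$ for every $j$ with $a_{i_0j}>0$; taking $j=i_0$, which is legitimate since the diagonal entry $a_{i_0i_0}=\funjn{i_0}(\poljn{i_0})$ is strictly positive, gives $v_{i_0}=\lambda v_{i_0}$ and hence $\lambda=1$. Therefore $\sigma(A)\setminus\setof{1}\subseteq\uballo$, which with the first part proves the theorem.

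The one genuinely delicate point is the last implication, $\abs{\lambda}=1\Rightarrow\lambda=1$. The bound $\abs{\lambda}\le 1$ uses only row-stochasticity of $A$, but a row-stochastic matrix may have unimodular eigenvalues other than $1$ (a permutation matrix, for instance), so the argument must exploit more, namely the positivity of the diagonal entries $a_{kk}=\funkn(\polkn)$ --- equivalently, that no functional $\funkn$ annihilates the basis function $\polkn$ attached to it (as is the case for the Bernstein and Schoenberg operators that motivate the result) --- which is what lets one transport the maximal-modulus equality back onto the diagonal. Accordingly, I would phrase the third step as a lemma on stochastic matrices with positive diagonal, and then deduce Theorem~\ref{thm:spectrum_of_finite-rank} by combining it with the compactness reduction of the first step.
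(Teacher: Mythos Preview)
Your approach is essentially the paper's: reduce the nonzero spectrum of $T$ to the spectrum of the row-stochastic matrix $A=(\funjn{i}(\poljn{j}))_{i,j}$, and then argue that the only unimodular eigenvalue of $A$ is $1$. The paper does Step~1 via annihilators and Step~3 via Gershgorin's theorem, whereas you do Step~1 by a bare dimension count and Step~3 by the standard maximum-modulus row argument; these are equivalent and equally short.

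You are right to flag the diagonal condition $a_{kk}=\funkn(\polkn)>0$ as the crux, and right that without it the conclusion can fail (e.g.\ on $C[0,1]$ take $e_1=1-x$, $e_2=x$, $\alpha_1^*=\delta_1$, $\alpha_2^*=\delta_0$; then $A=\bigl(\begin{smallmatrix}0&1\\1&0\end{smallmatrix}\bigr)$ and $-1\in\sigma_p(T)$). The paper's Gershgorin step uses the same hypothesis tacitly: the $k$th Gershgorin disk is $\{\lambda:\abs{\lambda-a_{kk}}\le 1-a_{kk}\}$, which meets the unit circle only at $1$ when $a_{kk}>0$ but equals the full closed unit disk when $a_{kk}=0$. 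One small sharpening of your write-up: as stated you invoke $a_{i_0i_0}>0$ only for the particular index $i_0$ realising $\normi{v}$, but since you do not control which $i_0$ occurs, the clean hypothesis to record in your lemma is that \emph{all} diagonal entries are positive (equivalently, $\funkn(\polkn)>0$ for every $k$), exactly as in the Bernstein, Schoenberg and Kantorovi\v{c} examples.
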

A more precise characterization of the spectrum gives the next corollary.
\begin{corollary}
  \label{cor:sumt_properties}
  The positive finite-rank operator $T \in \boundop{X}{Y}$ has the following properties:
  \begin{enumerate}
  \item $1 \in \ker(T - I)$, \ie, $1$ is an eigenvalue of $T$, and
  \item $\sigma_p(T) = \sigma(T) \subset \uballo \cup \setof{1}$.
  \end{enumerate}
  I.e., the only eigenvalue in the peripheral spectrum is $1$.
\end{corollary}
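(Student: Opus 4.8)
\noindent\emph{Proof plan.}
The plan is to reduce everything to the spectral analysis of a single $n\times n$ row-stochastic matrix. Let $V:=\spanof\setof{\poljn{1},\dots,\poljn{n}}$, the $n$-dimensional space spanned by the basis functions. Since $\im T\subseteq V$ and $T\poljn{j}=\sum_{k=1}^{n}\funjn{k}(\poljn{j})\,\poljn{k}\in V$, the subspace $V$ is $T$-invariant, and any eigenvector of $T$ for a nonzero eigenvalue lies in $\im T\subseteq V$. Regarding $T$ as a finite-rank --- hence compact --- operator on $X$ (legitimate, as $\im T\subseteq V\subseteq X$), Riesz--Schauder theory tells us that every nonzero point of $\spec{T}$ is an eigenvalue, so $\spec{T}\setminus\setof{0}$ coincides with the nonzero spectrum of the matrix $A=(a_{kj})_{k,j=1}^{n}$, $a_{kj}:=\funjn{k}(\poljn{j})$, representing $T|_{V}$ in the basis $\poljn{1},\dots,\poljn{n}$. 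Moreover, since $T$ has finite rank while $X$ is infinite-dimensional, $\ker T$ is infinite-dimensional, so $0\in\sigma_p(T)$; together this gives $\spec{T}=\sigma_p(T)$, the first assertion.

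Next I would read off the structure of $A$. Positivity of the $\funjn{k}$ and of the $\poljn{j}$ gives $a_{kj}\ge 0$, while linearity of $\funjn{k}$, the partition of unity \eqref{eq:partition_unity} and $\funjn{k}(1)=1$ give $\sum_{j=1}^{n}a_{kj}=\funjn{k}(\poljn{1}+\dots+\poljn{n})=\funjn{k}(1)=1$; hence $A$ is row-stochastic. Part~(1) of Corollary~\ref{cor:sumt_properties} is then immediate, $T1=\sum_{k=1}^{n}\funjn{k}(1)\poljn{k}=\sum_{k=1}^{n}\poljn{k}=1$ (at the matrix level, $A\mathbf 1=\mathbf 1$ with $\mathbf 1:=(1,\dots,1)^{\top}$), and since also $\normi{A}=1$ the spectral radius of $A$ equals $1$. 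Therefore $\sigma_p(A)\subseteq\uballc$, so $\spec{T}\subseteq\uballc$, while $1\in\sigma_p(A)\subseteq\sigma_p(T)$.

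The remaining --- and genuinely delicate --- point is to exclude eigenvalues of modulus $1$ other than $1$. Suppose $Ac=\lambda c$ with $c\in\Cc^{n}\setminus\setof{0}$ and $\abs{\lambda}=1$, and choose $k$ with $\abs{c_k}=\normi{c}>0$. Then
\[
\abs{c_k}=\abs{\lambda c_k}=\Bigl|\sum_{j=1}^{n}a_{kj}c_j\Bigr|\le\sum_{j=1}^{n}a_{kj}\abs{c_j}\le\normi{c}\sum_{j=1}^{n}a_{kj}=\normi{c}=\abs{c_k},
\]
so all inequalities are equalities; this forces $\abs{c_j}=\normi{c}$ whenever $a_{kj}>0$ and no cancellation in $\sum_j a_{kj}c_j$, so all such $c_j$ share a common phase $e^{i\varphi}$ and $\sum_j a_{kj}c_j=e^{i\varphi}\normi{c}$. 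The crux is to deduce $\lambda=1$, and this succeeds precisely when the diagonal entry $a_{kk}=\funjn{k}(\poljn{k})$ is strictly positive: then $c_k$ itself has phase $e^{i\varphi}$, whence $\lambda c_k=\lambda e^{i\varphi}\normi{c}$ must equal $e^{i\varphi}\normi{c}$, so $\lambda=1$; combined with $\spec{T}\subseteq\uballc$ this yields $\spec{T}=\sigma_p(T)\subseteq\uballo\cup\setof{1}$, the second assertion of Corollary~\ref{cor:sumt_properties}. I expect the main obstacle to be exactly this last step: the conclusion needs the nondegeneracy condition $\funjn{k}(\poljn{k})>0$ for every $k$ (equivalently, aperiodicity of $A$), which must accordingly be among the hypotheses on $T$. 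It holds for the Schoenberg operator, where the Greville abscissa $\xi_k$ lies in the interior of $\supp N_k$ so that $N_k(\xi_k)>0$; but it cannot simply be dropped, since for $n=2$, $\poljn{1}(t)=t$, $\poljn{2}(t)=1-t$, $\funjn{1}=\delta_0$, $\funjn{2}=\delta_1$ one gets $A=\bigl(\begin{smallmatrix}0&1\\1&0\end{smallmatrix}\bigr)$ and hence $-1\in\sigma_p(T)$. Under that hypothesis the display above finishes the proof; with only irreducibility of $A$ one can still propagate $\abs{c_j}=\normi{c}$ and the phase identity along the incidence graph of $A$, but obtains merely $\lambda\in\setof{\pm 1}$.
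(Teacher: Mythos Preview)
Your overall strategy coincides with the paper's: both reduce the question to the $n\times n$ matrix $A=(\funjn{k}(\poljn{j}))_{k,j}$, observe that it is nonnegative and row-stochastic, and then locate its eigenvalues. The differences are in execution. For $0\in\sigma_p(T)$ you use rank--nullity (finite rank on an infinite-dimensional $X$ forces $\ker T\neq\{0\}$), which is cleaner than the paper's route via $\ker T=\im(T^{*})_{\bot}$ and weak-$*$ density. For the peripheral spectrum you argue directly through the equality case in the triangle inequality, whereas the paper invokes Gershgorin's theorem: each disk $D_k=\{\lambda:\abs{\lambda-a_{kk}}\le 1-a_{kk}\}$ meets the unit circle only at $1$, so $\bigcup_k D_k\cap\{\abs{\lambda}=1\}=\{1\}$.

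More importantly, you have put your finger on a genuine gap in the \emph{statement} itself, and the paper's own argument shares it. The Gershgorin disk $D_k$ is the full closed unit disk precisely when $a_{kk}=0$, so the paper's claimed identity $\bigcup_k D_k\cap\{\abs{\lambda}=1\}=\{1\}$ tacitly assumes $\funjn{k}(\poljn{k})>0$ for every $k$ --- exactly the hypothesis you isolated. Your counterexample ($n=2$, $\poljn{1}(t)=t$, $\poljn{2}(t)=1-t$, $\funjn{1}=\delta_0$, $\funjn{2}=\delta_1$, giving $A=\bigl(\begin{smallmatrix}0&1\\1&0\end{smallmatrix}\bigr)$ and hence $-1\in\sigma_p(T)$) satisfies all the stated assumptions and shows the corollary is false without that extra condition. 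One small correction to your closing remark: under irreducibility alone the peripheral eigenvalues of a row-stochastic matrix are all the $h$-th roots of unity, $h$ being the period of $A$, not merely a subset of $\{\pm 1\}$.
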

Before we prove these, we first give some examples where our main result can be applied.
\begin{example}[$\spacecf$ and point evaluations]
The Riesz representation theorem gives a characterisation of positive
linear functionals on $\spacecf$. Namely, for every positive linear functional 
$a^*: \spacecf \to \Cc$, there is a unique positive Radon measure $\nu$ such that
\begin{equation*}
  a^*(f) = \int_0^1 f \mathrm{d}\nu \qquad\text{for every } f \in \spacecf.
\end{equation*}

  A classical example of a positive linear functional on $\spacecf$ is
  the Dirac measure at a point $x \in \ivu$ defined for $f \in \spacecf$ by
  \begin{equation*}
    \delta_x(f) = f(x).
  \end{equation*}
  Given a partition $\Delta_n = \setof{x_k}_{k=1}^n$ of $\ivu$  satisfying
  \begin{equation*}
    0 = x_1 < x_2 < \ldots < x_n = 1,
  \end{equation*}
  then a popular choice for the functionals are $a^*_k = \delta_{x_k}$ for $k \in \setof{1,\ldots,n}$. 
  Then the positive finite-rank operator can be written for $x \in \ivu$ as
  \begin{equation*}
    Tf (x) = \sum_{k=1}^n f(x_j)\cdot \polkn(x), \qquad \polkn \in \spacecf.
  \end{equation*}
  Operators of this kind are often used to approximate continuous functions by only a finite number of 
  samples. Using \autoref{thm:spectrum_of_finite-rank} we obtain that $\sigma(T) \subset \uballo \cup \setof{1}$.  
\end{example}

\begin{example}[Kantorovi{\v c} operator on $L^1(\ivu)$]
  The Weierstrass approximation theorem says that every continuous
  function can be uniformly approximated by polynomials. An often used technique to prove this theorem are
  the classical Bernstein polynomials and the corresponding Bernstein operator. 
  This result can be transfered to the space of integrable functions using
  the so called Kantorovi{\v c} operators. In 1930, Kantorovi{\v c}~\cite{kantorovich1930} has defined
  a sequence of operators $K_n:L^1(\ivu) \to \spacecf$ as
  \begin{equation*}
    K_nf(x) = (n+1)\sum_{k=0}^n \binom{n}{k}x^k(1-x)^{n-k}\int_{\frac{k}{n+1}}^{\frac{k+1}{n+1}}f(t)\mathrm{d}t,\qquad
    f \in L^1(\ivu).
  \end{equation*}
  In fact, $K_n1 = 1$ holds and each of the operators $K_n$ is positive and has finite-rank.
  Using the main result, we can characterize the spectrum of these operators:
  $\sigma(K_n) \subset \uballo \cup \setof{1}$.
\end{example}

\section{Notation}
In the following let $X$ and $Y$ be Banach spaces with topological duals $X^*$ and $Y^*$.
We denote the space of bounded linear operators from $X$ to $Y$ by $\boundop{X}{Y}$ 
equipped with the usual operator norm $\normop{\cdot}$. 
With $I$ we denote the identity operator on $\boundop{X}{Y}$. 
For $T \in \boundop{X}{Y}$, we denote by $\sigma(T)$ the spectrum of $T$, 
\begin{equation*}
  \sigma(T) = \cset{\lambda \in \Cc}{T - \lambda I \text{ is not invertible}}.
\end{equation*}
By $\sigma_p(T)$, we denote the point spectrum of $T$, 
\begin{equation*}
  \sigma_p(T) = \cset{\lambda \in \Cc}{T - \lambda I \text{ is not one-to-one}},
\end{equation*}
which contains all the eigenvalues of $T$.
The open ball of radius $r > 0$ at the point $z \in \Cc$ in the complex plane will be 
denoted by $\ballo{z}{r} := \cset{\lambda \in \Cc}{\abs{\lambda - z} < r}$ and its closure by $\ballc{z}{r}$.
For $M \subset X$ we denote by
\begin{equation*}
  M^\bot = \cset{x^* \in X^*}{x^*(x) = 0 \text{ for every } x \in M} \subset X^*,
\end{equation*}
the \emph{annihilator} of $M$, whereas the \emph{pre-annihilator} of the set 
$\Lambda \subset X^*$ will be denoted by
\begin{equation*}
  \Lambda_\bot = \cset{x \in X}{x^*(x) = 0 \text{ for every } x^* \in \Lambda} \subset X.
\end{equation*}
The annihilator set $M^\bot$ contains all continuous linear functionals on $X$ that vanish on $M$, while
 $\Lambda_\bot$ is the subset of $X$ on which every bounded functional from $\Lambda$ is zero.
For properties of annihilators we refer to the book of Rudin~\cite{rudin1991}.

\section{Basic properties of positive finite-rank operators}
This section discusses properties that characterize the positive finite-rank operator $T$.
The next lemma states the positivity of $T$ and the ability to reconstruct constants.
\begin{lemma}
  The linear operator $T$, defined by \eqref{eq:discrete_operator}, is
  positive and reproduces constants.
\end{lemma}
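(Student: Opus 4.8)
The statement has two parts: $T$ is positive, and $T$ reproduces constants (i.e.\ $T1 = 1$). Both follow more or less directly from the defining formula~\eqref{eq:discrete_operator} together with the hypotheses on the $\funkn$ and $\polkn$, so the ``proof'' is really just unpacking the definitions in the right order.

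\textit{Positivity.} Let $0 \le f \in X$. By assumption each functional $\funkn$ is positive, so $\funkn(f) \ge 0$ for every $k \in \setof{1,\ldots,n}$. Since each basis function satisfies $0 \le \polkn \in X$, every summand $\funkn(f)\cdot\polkn$ in~\eqref{eq:discrete_operator} is a nonnegative element of $Y$, and a finite sum of nonnegative elements is nonnegative. Hence $Tf \ge 0$, i.e.\ $T$ is a positive operator. (Linearity and boundedness of $T$ are immediate from linearity and boundedness of the $\funkn$ together with finiteness of the sum, so $T \in \boundop{X}{Y}$ as already asserted in the text.)

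\textit{Reproduction of constants.} Apply $T$ to the unit $1 \in X$. Using linearity and the normalisation $\funkn(1) = 1$ for each $k$, we get
\begin{equation*}
  T1 = \sum_{k=1}^n \funkn(1)\cdot\polkn = \sum_{k=1}^n \polkn = 1,
\end{equation*}
where the last equality is exactly the partition of unity property~\eqref{eq:partition_unity}. Thus $T1 = 1$, so $T$ reproduces the constants (and, more generally, $T(c\cdot 1) = c\cdot 1$ for every scalar $c$ by homogeneity).

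\textit{Main obstacle.} There is essentially none: the lemma is a warm-up that records the two structural facts --- positivity and $T1 = 1$ --- which are used repeatedly in the sequel (in particular to locate $1$ in the spectrum and to bound the peripheral spectrum in \autoref{thm:spectrum_of_finite-rank}). The only point that needs a word of care is making sure the ordering on $Y$ being used is the one induced by its cone of nonnegative functions, so that ``finite sums of nonnegative elements are nonnegative'' is literally the statement that the cone is closed under addition and nonnegative scaling; this is built into the notion of a Banach function space with unit, so no real work is required.
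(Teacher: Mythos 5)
Your proposal is correct and follows essentially the same argument as the paper: positivity of $T$ from positivity of the $\funkn$ and nonnegativity of the $\polkn$, and $T1 = 1$ from the normalisation $\funkn(1) = 1$ together with the partition of unity \eqref{eq:partition_unity}. The extra remarks on the order cone and boundedness are harmless additions but not needed for the paper's lemma.
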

\begin{proof}
  As the $\funkn$ are linear positive functionals and $\polkn \geq 0$, 
  we conclude for $f \in \spacecf$, $f \geq 0$,
  \begin{equation*}
    Tf = \sum_{k=1}^n \funkn(f)\polkn \geq 0.
  \end{equation*}
  And we obtain by applying the preconditions on $T$ that
  \begin{equation*}
    T1 = \sum_{k=1}^n \funkn(1) \polkn = \sum_{k=1}^n \polkn = 1.
  \end{equation*}
\end{proof}
\begin{lemma}
  The operator $T:X \to Y$ is bounded and $\normop{T} = 1$.
\end{lemma}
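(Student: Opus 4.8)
The plan is to establish $\normop{T}\le 1$ directly from the representation \eqref{eq:discrete_operator} and then observe that the reverse inequality comes for free. First I would fix $f\in X$ and use that each $\polkn$ is a positive element of the Banach function space $Y$: the triangle inequality for the modulus gives the order estimate $\abs{Tf}=\bigl|\sum_{k=1}^n\funkn(f)\polkn\bigr|\le\sum_{k=1}^n\abs{\funkn(f)}\,\polkn$. Since $\norm{\funkn}=1$ we have $\abs{\funkn(f)}\le\norm{f}$ for every $k$, whence by the partition of unity property \eqref{eq:partition_unity},
\begin{equation*}
  \abs{Tf}\le\norm{f}\sum_{k=1}^n\polkn=\norm{f}\cdot 1 .
\end{equation*}
Invoking the monotonicity of the norm on $Y$ together with $\norm{1}=1$ yields $\norm{Tf}\le\norm{f}$, so $T$ is bounded with $\normop{T}\le 1$.

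For the lower bound I would simply use the previous lemma: $T1=1$, hence $\normop{T}\ge\norm{T1}/\norm{1}=1$, and therefore $\normop{T}=1$. As an alternative to the displayed computation, the upper bound can be phrased through positivity of $T$ alone: $\abs{Tf}\le T\abs{f}\le T(\norm{f}\cdot 1)=\norm{f}\cdot T1=\norm{f}\cdot 1$, which sidesteps writing out the finite sum but relies on the same order facts.

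The only genuine point requiring care is making the two order-theoretic ingredients precise in the complex Banach function space setting, namely (i) that the modulus is subadditive and compatible with multiplication by a positive element, and (ii) that $0\le g\le h$ implies $\norm{g}\le\norm{h}$, together with the normalization $\norm{1}=1$. I would state these explicitly (or cite them from the standing hypotheses on $X$ and $Y$) before running the estimate, since they are exactly what turns the one-line bound above into a legitimate argument; the rest is routine.
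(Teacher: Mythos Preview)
Your argument is correct and follows essentially the same route as the paper: bound $\norm{Tf}$ by $\norm{f}$ using $\norm{\funkn}=1$ together with the partition of unity \eqref{eq:partition_unity}, and then obtain the reverse inequality from $T1=1$. Your version is in fact a bit more careful than the paper's, since you work at the order level ($\abs{Tf}\le\norm{f}\cdot 1$) and explicitly invoke monotonicity of the norm and $\norm{1}=1$, whereas the paper writes the estimate directly at the norm level.
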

\begin{proof}
  Let $f \in X$ such that $\norm{f} = 1$. Then
  \begin{equation*}
    \norm{Tf} = \norm{\sum_{k=1}^n \funkn(f)\cdot \polkn} \leq \max_k \abs{\funkn(f)} \cdot \sum_{k=1}^n \norm{\polkn}              \leq \norm{f} \cdot \max_k \norm{\funkn} = 1,
  \end{equation*}
  where we used the partition of unity \eqref{eq:partition_unity} and 
  that $\norm{\funkn} = 1$. Using that $T1 = 1$, we conclude that $\normop{T} = 1$.
\end{proof}
Now we will proof that the operator $T$ is indeed a finite-rank
operator and give additional basic properties.
\begin{lemma}
  The linear operator $T$ has finite rank. 
  Thus, the operator $T$ is compact.
\end{lemma}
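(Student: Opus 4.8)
The statement to prove is that the operator $T$ defined by $Tf = \sum_{k=1}^n \funkn(f)\cdot \polkn$ has finite rank, and consequently is compact.

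The plan is straightforward. First I would observe that the image of $T$ is contained in the linear span of the fixed functions $\polkn$, $k \in \{1,\ldots,n\}$. Indeed, for any $f \in X$, the element $Tf = \sum_{k=1}^n \funkn(f)\cdot \polkn$ is by definition a linear combination of $\polkn,\ldots,\poljn{n}$ with scalar coefficients $\funkn(f) \in \Cc$. Hence $\im(T) \subseteq \spanof\setof{\polkn : k = 1,\ldots,n}$, which is a subspace of dimension at most $n < \infty$. Therefore $T$ has finite rank, namely $\mathrm{rank}(T) = \dim \im(T) \le n$.

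For the compactness claim I would invoke the standard fact that every bounded finite-rank operator between Banach spaces is compact. We have already shown in the preceding lemma that $T$ is bounded with $\normop{T} = 1$, and we have just shown that $\im(T)$ is finite-dimensional. To see compactness directly: the closed unit ball $\uballc$ of $X$ is mapped by $T$ into the set $\setof{Tf : \norm{f} \le 1}$, which is a bounded subset of the finite-dimensional space $\im(T)$; by the Heine--Borel theorem its closure is compact in $\im(T)$, hence compact in $Y$. Thus $T$ maps bounded sets to relatively compact sets, i.e., $T$ is compact.

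I do not expect any genuine obstacle here; the only point requiring a little care is that the coefficients $\funkn(f)$ are finite scalars for every $f \in X$, which follows from the fact that each $\funkn$ is a (bounded) linear functional on $X$, so the spanning set is genuinely finite and $\im(T)$ is finite-dimensional. Everything else is a direct appeal to the definition of finite rank and to the well-known theorem that bounded finite-rank operators are compact.
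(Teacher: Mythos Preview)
Your proposal is correct and follows essentially the same approach as the paper: both observe that $\im(T) \subseteq \spanof\setof{\poljn{1},\ldots,\poljn{n}}$, whence $\dim \im(T) \le n$, and then invoke the standard fact that bounded finite-rank operators are compact. Your version merely adds a little more detail (the Heine--Borel justification for compactness), which is fine but not required.
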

\begin{proof}
  As $\im(T f) = \spanof\cset{ \sum_{k=1}^n \funkn(f)\cdot \polkn}{f \in X}$, 
  the image can be written as a linear combination of the $n$ basis functions $\polkn$ and hence, 
  $\dim(\im(T f)) \leq n$.  Therefore, the linear operator $T$ has finite rank. 
  It follows that the operator $T$ is compact.
\end{proof}

\begin{theorem}
  The adjoint $T^*:Y^* \to X^*$ of $T$ is a finite-rank operator. 
  It is given for $x^* \in Y^*$ as
  \begin{equation*}
    T^*x^*(f) = \sum_{k=1}^n x^*(\polkn) \funkn(f), \qquad f \in X.
  \end{equation*}
\end{theorem}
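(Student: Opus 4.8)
The plan is to obtain the formula directly from the defining relation of the adjoint, $(T^*x^*)(f) = x^*(Tf)$, and then to read off the finite-rank property from the shape of the resulting expression. First I would fix $x^* \in Y^*$ and $f \in X$ and substitute the definition \eqref{eq:discrete_operator} of $T$. Since the sum defining $Tf$ is finite, linearity of $x^*$ alone lets me pull $x^*$ inside the sum, and since each $\funkn(f)$ is a scalar it factors out, giving
\[
  (T^*x^*)(f) = x^*(Tf) = x^*\!\left(\sum_{k=1}^n \funkn(f)\,\polkn\right) = \sum_{k=1}^n \funkn(f)\, x^*(\polkn),
\]
which is exactly the claimed formula. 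That $T^*x^*$ is a bounded linear functional on $X$ is then automatic, being a finite linear combination of the bounded functionals $\funkn$; equivalently, $T^* \in \boundop{Y^*}{X^*}$ simply because $T \in \boundop{X}{Y}$.

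For the finite-rank assertion I would observe that the displayed identity exhibits $T^*x^*$ as the linear combination $\sum_{k=1}^n x^*(\polkn)\,\funkn$ of the \emph{fixed} functionals $\funjn{1},\dots,\funjn{n} \in X^*$, with coefficients $x^*(\polkn)$ depending on $x^*$. Hence
\[
  \im(T^*) \subseteq \spanof\setof{\funjn{1},\dots,\funjn{n}} \subset X^*,
\]
a subspace of dimension at most $n$, so $T^*$ has finite rank (at most $n$).

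There is essentially no obstacle here; the only points that deserve a word of care are that the interchange of $x^*$ with the summation is justified purely by finiteness of the sum (no continuity argument is needed) and that the scalars $\funkn(f)$ pull out of $x^*$ by homogeneity. As an alternative route one could invoke Schauder's theorem to deduce compactness of $T^*$ from the already-established compactness of $T$, but that is strictly weaker than the explicit finite-rank description obtained above and would not produce the formula, so the direct computation is the preferable approach.
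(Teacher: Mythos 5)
Your proof is correct and follows essentially the same route as the paper: compute $x^*(Tf)$ directly from the definition of $T$ and pull $x^*$ through the finite sum to read off the formula for $T^*x^*$. You are in fact slightly more complete than the paper, which stops at the computation, whereas you also spell out that $\im(T^*)\subseteq\spanof\setof{\funjn{1},\dots,\funjn{n}}$ and hence that $T^*$ has rank at most $n$.
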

\begin{proof}
  We calculate
  \begin{equation*}
    x^*(Tf) = x^*(\sum_{k=1}^n \funkn(f)\cdot \polkn)  
    = \sum_{k=1}^n \funkn(f) x^*(\polkn) 
    = T^*x^*(f).
  \end{equation*}  
\end{proof}

%
%
%

\section{Proof of the main result}
  Since $\normop{T} = 1$, the inequality
  \begin{equation*}
    \abs{\lambda} \leq \norm{T} = 1
  \end{equation*}
  holds for each $\lambda \in \sigma(T)$. Therefore, $\sigma(T) \subset \uballc$. 

  In the following, we show that $\sigma(T) \subset \uballo \cup
  \setof{1}$, i.e., if $\lambda \in \sigma(T)$ with $\abs{\lambda} =
  1$ then $\lambda = 1$ and all the spectral values are eigenvalues of $T$.

  The proof is organized as follows: first, we prove that $0 \in \sigma_p(T)$. 
  Then, we will show that $1 \in \sigma_p(T)$.  Finally, we consider eigenvalues 
  $\lambda \in \sigma_p(T) \setminus \setof{0,1}$ and we show that in this case $\abs{\lambda} < 1$ holds.

  Note that for compact operators it is known that every $\lambda \neq 0$ in the spectrum is 
  contained in the point spectrum.  This classical result is stated, e.g., in Rudin~\cite[Theorem~4.25]{rudin1991}.
  Therefore, if $0 \in \sigma_p(T)$, then it follows already that
  \begin{equation*}
    \sigma(T) = \sigma_p(T).
  \end{equation*}

  \begin{description}
  \item[Step 1:] 
    In order to prove that $0 \in \sigma_p(T)$ we show $\ker(T) \neq \setof{0}$.
    %
    %
    %
    %
    Using Rudin~\cite[Theorem 4.12]{rudin1991}, we obtain that $\ker(T) = \im(T^*)_\bot$.
    As $\im(T)$ is closed in $Y$, so is $\im(T^*)$ weak$^*$-closed in $X*$.
    Suppose now that $\ker(T) = \setof{0}$. It follows that
    $\im(T^*)_\bot = \setof{0}$ and therefore, $(\im(T^*)_\bot)^\bot = X^*$.
    This requires that $\im(T^*)$ is weak$^*$-dense in $X^*$. This gives a contradiction
    as $\im(T^*) = \spanof\setof{\funjn{1},\ldots,\funjn{n}}$ is weak$^*$-closed and 
    $X^* \neq \spanof\setof{\funjn{1},\ldots,\funjn{n}}$, because $X^*$ is infinite-dimensional. 
    We conclude that $\ker(T) \neq \setof{0}$ and the finite-rank operator $T$ is not one-to-one, \ie,
    %
    %
    $0 \in \sigma_p(T)$.


    \item[Step 2:] We have $1 \in \sigma(T)$, because of the partition of the unity property and the
    function $f(x) = 1$ is an eigenfunction of $T$ corresponding to the eigenvalue $1$.

    \item[Step 3:] Now we prove that for all the other eigenvalues $\lambda \in \sigma(T)$, we have
    \begin{equation*}
      \abs{\lambda} < 1.
    \end{equation*}
    Let $\lambda \in \sigma(T) \setminus \setof{0}$. As the
    operator maps continuous functions to finite dimensional space $\im(T)$, the
    eigenfunctions have to be in this space, too.  
    Let $p \in \im(T)$, $p = \sum_{k=1}^{n}c_j \polkn$, be such an
    eigenfunction for the eigenvalue $\lambda$.  Then
    \begin{alignat*}{2}
      && & T p = \lambda p\\
      &&\Longleftrightarrow\qquad &
      \sum_{k=1}^{n}\sum_{j=1}^{n}c_j \funkn(\poljn{j}) \polkn(x)
      = \lambda \sum_{k=1}^{n}c_j \polkn(x)\\
      &&\Longleftrightarrow\qquad &
      \sum_{k=1}^{n}\left[\sum_{j=1}^{n}c_j\funkn(\poljn{j}) -
        \lambda c_k\right]\polkn(x)
      = 0\\
      &&\Longleftrightarrow\qquad &
      \sum_{j=1}^{n}c_j\funkn(\poljn{j}) = \lambda c_k,\qquad\text{for all } k \in \setof{0,\ldots,n}.
    \end{alignat*}
    Thus, $\lambda \neq 0$ is an eigenvalue of the operator $T$, if
    and only if $\lambda$ is an eigenvalue of the matrix $M \in \Rr^{n \times n}$,
    \begin{equation*}
      M =
      \begin{pmatrix}
        \funjn{1}(\poljn{1}) & \funjn{1}(\poljn{2}) & \cdots & \funjn{1}(\poljn{n})\\
        \funjn{2}(\poljn{1}) & \funjn{2}(\poljn{2}) & \cdots & \funjn{2}(\poljn{n})\\
        \vdots & & &\\
        \funjn{n}(\poljn{1}) & \funjn{n}(\poljn{2}) & \cdots & \funjn{n}(\poljn{n})\\
      \end{pmatrix}.
    \end{equation*}
    This matrix $M$ is nonnegative as $\poljn{k} \geq 0$ and $\funjn{k}$ are positive linear functionals.
    Moreover, every row sums up to one because of the partition of unity property.
    To see this, we calculate for some fixed row $k \in \setof{1,\ldots,n}$
    \begin{equation}
      \label{eq:sumt_row_sum_one}
      \sum_{i=1}^n \funjn{k}(\poljn{j}) = \funjn{k}( \sum_{j=1}^n \poljn{j} ) = \funjn{k}(1) = 1.
    \end{equation}

    Therefore, by the Theorem of Gershgorin \cite{Gershgorin:1931}, we have that
    the eigenvalues are contained in the union of circles
    \begin{equation*}
      \lambda \in \bigcup_{j=-k}^{n-1} D_j,
    \end{equation*}
    with 
    \begin{equation*}
      D_k = \cset{\lambda \in \Cc}{\abs{\lambda - \funjn{k}(\poljn{k})}
        \leq \sum_{j=1, j\neq k}^{n} \funjn{k}(\poljn{j})}.
    \end{equation*}
    Using the partition of unity property \eqref{eq:partition_unity}, it follows that
    \begin{equation*}
      \bigcup_{k=1}^{n} D_k \cap \cset{\lambda \in \Cc}{\abs{\lambda} = 1}   = \setof{1}.
    \end{equation*}  
    Finally, we obtain $\sigma_p(T) = \sigma(T) \subset \uballo \cup \setof{1}$.
  \end{description}
\hfill $\Box$

\bibliographystyle{elsarticle-num}
\bibliography{references}

\begin{thebibliography}{1}
\expandafter\ifx\csname url\endcsname\relax
  \def\url#1{\texttt{#1}}\fi
\expandafter\ifx\csname urlprefix\endcsname\relax\def\urlprefix{URL }\fi
\expandafter\ifx\csname href\endcsname\relax
  \def\href#1#2{#2} \def\path#1{#1}\fi

\bibitem{nagler2014}
J.~{Nagler}, P.~{Cerejeiras}, B.~{Forster}, {Lower bounds for the approximation
  with variation-diminishing splines}, ArXiv e-prints\href
  {http://arxiv.org/abs/1402.2403} {\path{arXiv:1402.2403}}.

\bibitem{kantorovich1930}
L.~{Kantorovi{\v c}}, {Sur certains d\'eveloppements suivant les polynomes de
  la forme de S. Bernstein. I, II.}, {C. R. (Dokl.) Acad. Sci. URSS A} 1930
  (1930) 563--568, 595--600.

\bibitem{rudin1991}
W.~{Rudin}, {Functional analysis.}, 2nd Edition, New York, NY: McGraw-Hill,
  1991.

\bibitem{Gershgorin:1931}
S.~Gershgorin, {\"Uber die Abgrenzung der Eigenwerte einer Matrix}, Izv. Akad.
  Nauk SSSR Ser. mat 7 (1931) 749--754.

\end{thebibliography}

\end{document}